\newtheorem{theorem}{Theorem}
\newtheorem{corollary}[theorem]{Corollary}
\newtheorem{remark}[theorem]{Remark}
\newenvironment{proof}[1][Proof]{\noindent\textbf{#1.} }{\ \rule{0.5em}{0.5em}}
\begin{document}

\title{Korovkin type theorem for iterates of certain positive linear operators}
\author{N. I. Mahmudov\\Department of Mathematics\\Eastern Mediterranean University \\Gazimagusa, TRNC via Mersin 10, Turkey \\email: nazim.mahmudov@emu.edu.tr}
\date{Submitted: March 15, 2011}
\maketitle

\begin{abstract}
In this paper we prove that if $T:C\left[  0,1\right]  \rightarrow C\left[
0,1\right]  $ is a positive linear operator with $T\left(  e_{0}\right)
=1\ $and$\ T\left(  e_{1}\right)  -e_{1}$ does not change the sign, then the
iterates $T^{m}$ converges to some positive linear operator $T^{\infty}$
$:C\left[  0,1\right]  \rightarrow C\left[  0,1\right]  $ and we derive
quantitative estimates in terms of modulii of smoothness. This result enlarges
the class of operators for which the limit of the iterates can be computed and
the quantitative estimates of iterates can be given.

\end{abstract}

\section{Introduction and Main results}

The methods employed to study the convergence of iterates of some operators
include Matrix Theory methods, like stochastic matrices, Korovkin-type
theorems, quantitative results about the approximation of functions by
positive linear operators, point theorems, or methods from the theory of
$C_{0}$-semigroups, like Trotter's approximation theorem, see \cite{Alt1}%
-\cite{wenz}. However, these results fail to calculate the iterate limit and
give the quantitative estimates of iterates for many classical operators. In
respect to this, we mention recent works by I. Gavrea and M. Ivan
\cite{gavrea1}, \cite{gavrea2} and I. Rasa \cite{rasa1}, \cite{rasa2}.

In this paper we establish quantitative Korovkin type theorem for the iterates
of certain positive linear operators $T:C\left[  0,1\right]  \rightarrow
C\left[  0,1\right]  $ satisfying $T\left(  e_{0}\right)  =e_{0},\ \ T\left(
e_{1}\right)  -e_{1}\leq0$ (or $\geq0$). As a consequence of our results, we
obtain the quantitaive estimates for the iterates of almost all classical and
new positive linear operators. Notice that quantitative Korovkin type theorems
for a sequence of positive linear operators are studied in \cite{wang2},
\cite{mah2}.

Let $C\left[  0,1\right]  $ be the set of all real-valued and continuous
functions defined on the compact interval $\left[  0,1\right]  $ endowed with
the $\sup$-norm $\left\Vert f\right\Vert :=\sup\left\{  \left\vert f\left(
x\right)  \right\vert :x\in\left[  0,1\right]  \right\}  $. $W_{2,\infty
}\left[  0,1\right]  $ is defined as follows
\begin{align*}
W_{2,\infty}\left[  0,1\right]   &  :=\left\{  f\in C\left[  0,1\right]
:f^{\prime}\text{ absolutely continuous, }\left\Vert f^{\prime\prime
}\right\Vert _{L_{\infty}}<\infty\right\}  ,\\
\left\Vert f\right\Vert _{L_{\infty}} &  :=\text{vrai}\max\left\{  \left\vert
f^{\prime\prime}\left(  x\right)  \right\vert :0\leq x\leq1\right\}  ,
\end{align*}
and $L_{\infty}$ is the space of essentially bounded measurable functions
endowed with $\left\Vert \cdot\right\Vert _{L_{\infty}}$ norm.

The main tools to measure the degree of convergence of the powers of positive
linear operators are the moduli of smoothness of first and second order. For
$f\in C\left[  0,1\right]  $ and $\delta\geq0$ we have%
\begin{align*}
\omega_{1}\left(  f;\delta\right)   &  :=\sup\left\{  \left\vert f\left(
x+h\right)  -f\left(  x\right)  \right\vert :x,x+h\in\left[  0,1\right]
,0\leq h\leq\delta\right\}  ,\\
\omega_{2}\left(  f;\delta\right)   &  :=\sup\left\{  \left\vert f\left(
x+h\right)  -2f\left(  x\right)  +f\left(  x-h\right)  \right\vert :x,x\pm
h\in\left[  0,1\right]  ,0\leq h\leq\delta\right\}  .
\end{align*}
For $f\in C\left[  0,1\right]  $ we define the extension $f_{h}:\left[
-h,1+h\right]  \rightarrow\mathbb{R},$ with $h>0$, by%
\[
f_{h}\left(  x\right)  :=\left\{
\begin{array}
[c]{c}%
P_{-}\left(  x\right)  ,\ \ \ -h\leq x\leq0,\\
f\left(  x\right)  ,\ \ \ 0\leq x\leq1,\\
P_{+}\left(  x\right)  ,\ \ \ 1\leq x\leq1+h,
\end{array}
\right.
\]
where $P_{-}$ and $P_{+}$ are at most order best approximants to $f$ on the
indicated intervals. Then Zhuk's function $Z_{h}f$ is defined by means of the
second order Steklov means%
\[
Z_{h}f\left(  x\right)  :=\frac{1}{h}\int_{-h}^{h}\left(  1-\frac{\left\vert
t\right\vert }{h}\right)  f_{h}\left(  x+t\right)  dt,\ \ 0\leq x\leq1.
\]
It can be shown that $Z_{h}f\in W_{2,\infty}\left[  0,1\right]  .$ It is known
that for sufficiently large $l$ and a fixed $\varepsilon>0$ we have%
\begin{align}
\left\Vert f-g\right\Vert  &  \leq\left\Vert f-Z_{h}f\right\Vert +\left\Vert
B_{l}\left(  Z_{h}f\right)  -Z_{h}f\right\Vert \leq\frac{3}{4}\omega
_{2}\left(  f;h\right)  +\varepsilon,\nonumber\\
\left\Vert g^{\prime}\right\Vert  &  \leq\left\Vert \left(  Z_{h}f\right)
^{\prime}\right\Vert \leq\frac{1}{h}\left(  2\omega_{1}\left(  f;h\right)
+\frac{3}{2}\omega_{2}\left(  f;h\right)  \right)  ,\nonumber\\
\left\Vert g^{\prime\prime}\right\Vert  &  \leq\left\Vert \left(  Z_{\delta
}f\right)  ^{\prime\prime}\right\Vert _{L_{\infty}}\leq\frac{1}{\delta^{2}%
}\frac{3}{2}\omega_{2}\left(  f;\delta\right)  .\label{zhuk}%
\end{align}

For any positive linear operator $T:C\left[  0,1\right]  \rightarrow C\left[
0,1\right]  $, we define the powers of $T$ by%
\[
T^{0}=I,\ \ \ T^{1}=T,\ \ \ T^{m+1}=T\circ T^{m},\ \ \ m\in\mathbb{N}.
\]

Let $e_{i}:\left[  0,1\right]  \rightarrow R$ be the monomial functions
$e_{i}\left(  x\right)  =x^{i},$ $i=0,1,2.$

Now we formulate the main results of the paper. It shows that under the
conditions $T\left(  e_{0}\right)  =1\ $and $T\left(  e_{1}\right)  -e_{1}$
does not change the sign, the iterates of $T:C\left[  0,1\right]  \rightarrow
C\left[  0,1\right]  $ converges to some linear positive operator $T^{\infty
}:C\left[  0,1\right]  \rightarrow C\left[  0,1\right]  .$

\begin{theorem}
\label{thm0} Suppose that $T:C\left[  0,1\right]  \rightarrow C\left[
0,1\right]  $ is a positive linear operator such that $T\left(  e_{0}\right)
=e_{0}.$

\begin{enumerate}
\item[(a)] If $T\left(  e_{1}\right)  \leq e_{1}$, then there exists a linear
positive positive operator $T^{\infty}:C\left[  0,1\right]  \rightarrow
C\left[  0,1\right]  $ such that
\[
\lim_{n\rightarrow\infty}T^{m}\left(  f\right)  =T^{\infty}\left(  f\right)
,\ \ \ f\in C\left[  0,1\right]  ,
\]
and the following pointwise estimate%
\begin{align}
\left\vert T^{\infty}\left(  f;x\right)  -T^{m}\left(  f;x\right)
\right\vert  &  \leq3\omega_{2}\left(  f;\left\vert \left(  T^{m}-T^{\infty
}\right)  \left(  e_{1};x\right)  \right\vert \right)  +2\omega_{1}\left(
f;\left\vert \left(  T^{m}-T^{\infty}\right)  \left(  e_{1};x\right)
\right\vert \right) \nonumber\\
&  +\frac{3}{4}\omega_{2}\left(  f;\sqrt{\left\vert \left(  T^{\infty}%
-T^{m}\right)  \left(  e_{2};x\right)  \right\vert +2\left\vert \left(
T^{m}-T^{\infty}\right)  \left(  e_{1};x\right)  \right\vert }\right)
\label{ss1}%
\end{align}
holds true \textit{for }$x\in\left[  0,1\right]  $\textit{ and }$f\in C\left[
0,1\right]  .$

\item[(b)] If $T\left(  e_{1}\right)  \geq e_{1}$, then there exists a linear
positive positive operator $T^{\infty}:C\left[  0,1\right]  \rightarrow
C\left[  0,1\right]  $ such that
\[
\lim_{n\rightarrow\infty}T^{m}\left(  f\right)  =T^{\infty}\left(  f\right)
,\ \ \ f\in C\left[  0,1\right]  ,
\]
and the following pointwise estimate%
\begin{align}
\left\vert T^{\infty}\left(  f;x\right)  -T^{m}\left(  f;x\right)
\right\vert  &  \leq3\omega_{2}\left(  f;\left\vert \left(  T^{m}-T^{\infty
}\right)  \left(  e_{1};x\right)  \right\vert \right)  +2\omega_{1}\left(
f;\left\vert \left(  T^{m}-T^{\infty}\right)  \left(  e_{1};x\right)
\right\vert \right) \nonumber\\
&  +\frac{3}{4}\omega_{2}\left(  f;\sqrt{\left\vert \left(  T^{\infty}%
-T^{m}\right)  \left(  e_{2};x\right)  \right\vert }\right)  \label{sss1}%
\end{align}
holds true \textit{for }$x\in\left[  0,1\right]  $\textit{ and }$f\in C\left[
0,1\right]  .$
\end{enumerate}
\end{theorem}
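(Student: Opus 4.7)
The plan is to proceed in two main phases: first, establish convergence of the iterates $T^m$ to a positive linear operator $T^\infty$; second, derive the pointwise quantitative estimate via Zhuk's smoothing together with a Taylor expansion.

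For the convergence phase, the hypothesis $T(e_0)=e_0$ together with positivity makes $T$ a sup-norm contraction with $T^m(e_0)\equiv 1$. The identity $T^{m+1}(e_1)-T^m(e_1)=T^m(T(e_1)-e_1)$, combined with positivity of $T^m$ and the sign hypothesis on $T(e_1)-e_1$, shows that $\{T^m(e_1)\}$ is pointwise monotone (nonincreasing in case (a), nondecreasing in case (b)) and bounded, hence convergent to some $\varphi_1$, with uniform convergence to a continuous limit following from Dini's theorem. The convergence of $T^m(e_2)$ is more delicate since $T(e_2)-e_2$ need not be one-signed; however, the Cauchy--Schwarz inequality $T^m(e_1)^2\leq T^m(e_2)\leq T^m(e_0)=1$ gives uniform boundedness, and convergence can then be secured by an operator-theoretic argument along the lines of the cited literature, exploiting that any limit $T^\infty$ satisfies $T\circ T^\infty=T^\infty\circ T=T^\infty$ and is therefore an idempotent projection. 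A generalised Korovkin theorem together with $\|T^m\|\leq 1$ then lifts convergence on $\{e_0,e_1,e_2\}$ to convergence on all of $C[0,1]$, defining $T^\infty$.

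For the quantitative estimate, fix $x\in[0,1]$, let $g:=Z_hf\in W_{2,\infty}[0,1]$ for a smoothing parameter $h>0$ to be chosen, and decompose $(T^m-T^\infty)(f;x)=(T^m-T^\infty)(f-g;x)+(T^m-T^\infty)(g;x)$. The rough piece is controlled by $2\|f-g\|\leq\tfrac{3}{2}\omega_2(f;h)+2\varepsilon$ using $\|T^m\|=\|T^\infty\|=1$ together with (\ref{zhuk}). For the smooth piece, I Taylor expand $g$ with integral remainder about $y:=T^\infty(e_1;x)$, apply $T^m$ and $T^\infty$, and subtract; the identities $T^m(e_0)=T^\infty(e_0)=1$ and $T^\infty(e_1;x)=y$ collapse the affine contributions into the single term $g'(y)(T^m-T^\infty)(e_1;x)$, bounded by $\|g'\|\cdot|(T^m-T^\infty)(e_1;x)|$, while the Taylor remainder is controlled by $\tfrac{1}{2}\|g''\|_{L_\infty}$ times a quadratic-moment expression involving $(e_1-y)^2$.

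The heart of the argument is to convert this quadratic-moment expression into the quantities appearing under the square root in (\ref{ss1}) and (\ref{sss1}). The key identity, obtained by direct expansion, is
$T^m((e_1-y)^2;x)-T^\infty((e_1-y)^2;x)=(T^m-T^\infty)(e_2;x)-2y(T^m-T^\infty)(e_1;x),$
which, combined with $0\leq y\leq 1$ and the one-signedness of $(T^m-T^\infty)(e_1;x)$ dictated by the hypothesis (nonnegative in case (a), nonpositive in case (b)), yields the bound $|(T^\infty-T^m)(e_2;x)|+2|(T^m-T^\infty)(e_1;x)|$ in case (a) and the simpler $|(T^\infty-T^m)(e_2;x)|$ in case (b). Finally, inserting (\ref{zhuk}) with $h=|(T^m-T^\infty)(e_1;x)|$ for the $\|g'\|$-contribution (producing the $2\omega_1$ and part of the $3\omega_2$ coefficients) and $h$ equal to the square-root expression for the $\|g''\|_{L_\infty}$-contribution (producing the $\tfrac{3}{4}\omega_2$ coefficient) yields the constants $3,\,2,\,\tfrac{3}{4}$ exactly as stated. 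The principal obstacle is this coherent coordination of the two smoothing scales and the careful sign bookkeeping in the key identity above, which is precisely what distinguishes the estimates in cases (a) and (b).
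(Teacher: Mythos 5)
Your proposal has two genuine gaps, one in each phase. In the convergence phase, you obtain monotone convergence of $T^m(e_1)$, but for $T^m(e_2)$ you offer only uniform boundedness plus an appeal to ``an operator-theoretic argument along the lines of the cited literature'' and a ``generalised Korovkin theorem'' lifting convergence from $\{e_0,e_1,e_2\}$ to all of $C[0,1]$. Neither step stands: sup-norm boundedness gives no convergence (there is no compactness in $C[0,1]$), the relation $T\circ T^{\infty}=T^{\infty}$ presupposes the operator limit whose existence is the point at issue, and convergence of positive operators on $e_0,e_1,e_2$ towards a general positive limit does \emph{not} imply convergence on all of $C[0,1]$ --- the classical Korovkin mechanism needs the limit to annihilate the second moment $(e_1-x)^2$ at each $x$, which fails here (for instance the limit $P(f;x)=(1-x)f(0)+xf(1)$ has $P\bigl((e_1-x)^2;x\bigr)=x(1-x)$), and two distinct positive operators can agree on $e_0,e_1,e_2$. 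The paper gets convergence of the moments for free from the same monotonicity argument you used for $e_1$: in case (b) $e_2$ itself is nondecreasing convex, and in case (a) $(1-t)^2$ is nonincreasing convex, so Dini applies; the passage to all of $C[0,1]$ is then made through a quantitative Cauchy estimate for $g\in C^2[0,1]$ plus density and Banach--Steinhaus, not through a Korovkin theorem.

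In the estimate phase, the Taylor expansion about $y:=T^{\infty}(e_1;x)$ gives $(T^m-T^{\infty})(g;x)=g'(y)(T^m-T^{\infty})(e_1;x)+T^m(R;x)-T^{\infty}(R;x)$ with $|R(t)|\leq\frac{1}{2}\Vert g''\Vert_{L_\infty}(t-y)^2$, and this only yields
\[
\left\vert T^m(R;x)-T^{\infty}(R;x)\right\vert \leq\frac{1}{2}\left\Vert g''\right\Vert_{L_\infty}\left(  T^m\bigl((e_1-y)^2;x\bigr)+T^{\infty}\bigl((e_1-y)^2;x\bigr)\right),
\]
the \emph{sum} of the quadratic moments, whereas your key identity concerns their \emph{difference}. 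Since $T^{\infty}\bigl((e_1-y)^2;x\bigr)$ is in general a fixed positive quantity (again $x(1-x)$ when $T^{\infty}=P$), it is not dominated by $\vert(T^{\infty}-T^m)(e_2;x)\vert+2\vert(T^m-T^{\infty})(e_1;x)\vert$, which tends to $0$ as $m\to\infty$; the sign hypothesis on $T(e_1)-e_1$ does not repair this, because $R$ changes sign and there is no comparison between $T^m(R)$ and $T^{\infty}(R)$. The paper's engine is different and is exactly what supplies the missing one-signedness: for $g\in C^2[0,1]$ it forms $g_{\pm}(t)=\frac{1}{2}\Vert g''\Vert t^2+\Vert g'\Vert t\pm g(t)$ in case (b), respectively $g_{\pm}(t)=\frac{1}{2}\Vert g''\Vert(1-t)^2+\Vert g'\Vert(1-t)\pm g(t)$ in case (a); these are monotone convex, so $T^{m+p}(g_{\pm};x)-T^m(g_{\pm};x)\geq0$, which converts directly into a bound on $\vert(T^{m+p}-T^m)(g;x)\vert$ by $\frac{1}{2}\Vert g''\Vert\,\vert\Delta e_2\vert+\Vert g'\Vert\,\vert\Delta e_1\vert$ (with an extra $\Vert g''\Vert\,\vert\Delta e_1\vert$ in case (a)), involving only moment differences; letting $p\to\infty$ and then applying the Zhuk inequalities (\ref{zhuk}) with the two smoothing scales you chose gives (\ref{ss1}) and (\ref{sss1}). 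Your final Zhuk step and choice of scales agree with the paper, but without the $g_{\pm}$ device (or a substitute providing that sign structure) both the existence of $T^{\infty}$ on all of $C[0,1]$ and the moment-difference bound remain unproven.
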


\begin{corollary}
\label{thm1} Suppose that $T:C\left[  0,1\right]  \rightarrow C\left[
0,1\right]  $ is a positive linear operator such that
\begin{equation}
T\left(  e_{0}\right)  =e_{0},\ \ T\left(  e_{1}\right)  =e_{1}. \label{cc10}%
\end{equation}
Then there exists a linear positive operator $T^{\infty}:C\left[  0,1\right]
\rightarrow C\left[  0,1\right]  $ such that
\[
\lim_{m\rightarrow\infty}\left\Vert T^{\infty}\left(  f\right)  -T^{m}\left(
f\right)  \right\Vert =0,\ \ \ \mathit{\ }f\in C\left[  0,1\right]  .
\]
Furthermore the following pointwise estimate%
\begin{equation}
\left\vert T^{\infty}\left(  f;x\right)  -T^{m}\left(  f;x\right)  \right\vert
\leq\frac{3}{4}\omega_{2}\left(  f;\sqrt{\left\vert \left(  T^{\infty}%
-T^{m}\right)  \left(  e_{2};x\right)  \right\vert }\right)  \label{rr1}%
\end{equation}
holds true \textit{for }$x\in\left[  0,1\right]  $ \textit{and }$f\in C\left[
0,1\right]  .$
\end{corollary}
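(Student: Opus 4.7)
The plan is to obtain the corollary as a direct specialization of Theorem \ref{thm0} to the case $T(e_1)=e_1$. Since the hypothesis $T(e_1)=e_1$ implies both $T(e_1)\le e_1$ and $T(e_1)\ge e_1$, either part of Theorem \ref{thm0} is available and produces a positive linear operator $T^{\infty}:C[0,1]\to C[0,1]$ such that $T^{m}(f)\to T^{\infty}(f)$ for every $f\in C[0,1]$.

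The key simplification is that the identity $T(e_1)=e_1$ propagates to every iterate: by induction, $T^{m}(e_1)=e_1$ for all $m\in\mathbb{N}$, and passing to the limit yields $T^{\infty}(e_1)=e_1$ as well. Consequently
\[
(T^{m}-T^{\infty})(e_1;x)=0\quad\text{for every }x\in[0,1]\text{ and every }m\in\mathbb{N}.
\]
Substituting this into the pointwise estimate (\ref{sss1}) from Theorem \ref{thm0}(b), the $\omega_{1}$-term and the first $\omega_{2}$-term vanish because $\omega_{1}(f;0)=\omega_{2}(f;0)=0$, while the argument of the remaining $\omega_{2}$ collapses to $\sqrt{|(T^{\infty}-T^{m})(e_{2};x)|}$. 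This is exactly (\ref{rr1}). The same computation works starting from (\ref{ss1}), which is reassuring.

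For the uniform convergence, I apply Theorem \ref{thm0} with the specific choice $f=e_{2}$ to obtain $\|(T^{m}-T^{\infty})(e_{2})\|\to 0$ as $m\to\infty$. Taking the supremum over $x\in[0,1]$ in (\ref{rr1}) and using the monotonicity of $\omega_{2}(f;\cdot)$ together with the fact that $\omega_{2}(f;\delta)\to 0$ as $\delta\to 0^{+}$ whenever $f\in C[0,1]$, the desired sup-norm convergence $\|T^{\infty}(f)-T^{m}(f)\|\to 0$ follows.

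There is no substantive obstacle: the result is a direct corollary. The only point requiring care is confirming that the fixed-point identity $T^{m}(e_{1})=e_{1}$ transfers to the limit operator, which is immediate from the (pointwise) convergence $T^{m}\to T^{\infty}$ provided by Theorem \ref{thm0}, and that $\omega_{1},\omega_{2}$ vanish at $0$, which is built into their definitions.
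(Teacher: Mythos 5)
Your proposal is correct and is essentially the argument the paper intends: the corollary is the direct specialization of Theorem \ref{thm0} to $T(e_{1})=e_{1}$, where $T^{m}(e_{1})=e_{1}=T^{\infty}(e_{1})$ makes the $\omega_{1}$-term and the first $\omega_{2}$-term in (\ref{ss1})/(\ref{sss1}) vanish, leaving exactly (\ref{rr1}); your extra step deducing the sup-norm convergence from (\ref{rr1}) with $f=e_{2}$ is a sound (and slightly more explicit) way to get the uniform statement.
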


The second result shows that under the conditons $T\left(  e_{0}\right)
=e_{0},\ T\left(  e_{1}\right)  =e_{1},$ $T^{\infty}\left(  e_{2}\right)
=e_{1}$ the limit of the iteraes $T^{m}$ is exactly the operator $P\left(
f;x\right)  :=\left(  1-x\right)  f\left(  0\right)  +xf\left(  1\right)  ,$
and under the conditions $T\left(  e_{0}\right)  =e_{0},\ T\left(
e_{2}\right)  =e_{2},$ $T^{\infty}\left(  e_{1}\right)  =e_{2}$ the limit of
the iteraes $T^{m}$ is exactly the operator $V\left(  f;x\right)  :=\left(
1-x^{2}\right)  f\left(  0\right)  +x^{2}f\left(  1\right)  .$

\begin{theorem}
\label{t:11}Suppose that $T:C\left[  0,1\right]  \rightarrow C\left[
0,1\right]  $ is a positive linear operator.

\begin{enumerate}
\item[(a)] If $T\left(  e_{0}\right)  =e_{0},\ T\left(  e_{1}\right)
=e_{1},T^{\infty}\left(  e_{2}\right)  =e_{1},$ then $T^{\infty}=P$ and
\begin{equation}
\left\vert P\left(  f;x\right)  -T^{m}\left(  f;x\right)  \right\vert
\leq\frac{3}{4}\omega_{2}\left(  f;\sqrt{\left\vert x-T^{m}\left(
e_{2};x\right)  \right\vert }\right)  . \label{ee1}%
\end{equation}

\item[(b)] If $T\left(  e_{0}\right)  =e_{0},\ \ \ T\left(  e_{1}\right)  \leq
e_{1},\ \ \ T\left(  e_{2}\right)  =e_{2},T^{\infty}\left(  e_{1}\right)
=e_{2},$ then $T^{\infty}=V$ and
\begin{align*}
\left\vert V\left(  f;x\right)  -T^{m}\left(  f;x\right)  \right\vert  &
\leq3\omega_{2}\left(  f;\left\vert T^{m}\left(  e_{1};x\right)
-x^{2}\right\vert \right)  +2\omega_{1}\left(  f;\left\vert T^{m}\left(
e_{1};x\right)  -x^{2}\right\vert \right) \\
&  +\frac{3}{4}\omega_{2}\left(  f;\sqrt{2\left\vert T^{m}\left(
e_{1};x\right)  -x^{2}\right\vert }\right)  .
\end{align*}

\item[(c)] If $T\left(  e_{0}\right)  =e_{0},\ \ \ T\left(  e_{1}\right)  \geq
e_{1},\ \ \ T\left(  e_{2}\right)  =e_{2},T^{\infty}\left(  e_{1}\right)
=e_{2},$ then $T^{\infty}=V$ and
\[
\left\vert V\left(  f;x\right)  -T^{m}\left(  f;x\right)  \right\vert
\leq3\omega_{2}\left(  f;\left\vert T^{m}\left(  e_{1};x\right)
-x^{2}\right\vert \right)  +2\omega_{1}\left(  f;\left\vert T^{m}\left(
e_{1};x\right)  -x^{2}\right\vert \right)  .
\]

\end{enumerate}
\end{theorem}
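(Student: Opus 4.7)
The plan is to deduce each part directly from the existence statement and pointwise estimate already supplied by Theorem \ref{thm0}/Corollary \ref{thm1}, after identifying $T^\infty$ explicitly as either $P$ or $V$ from the given moment conditions. First observe that in each case the hypotheses trigger one of the earlier results: case (a) falls under Corollary \ref{thm1}, case (b) under Theorem \ref{thm0}(a), and case (c) under Theorem \ref{thm0}(b). Hence I already have $T^m\to T^\infty$ together with a pointwise error bound involving $(T^m-T^\infty)(e_1;x)$ and/or $(T^m-T^\infty)(e_2;x)$.

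The heart of the argument is the identification of $T^\infty$. I would first compute $T^\infty(e_i;x)$ for $i=0,1,2$ from the hypotheses. In (a), $T^\infty(e_0;x)=1$, $T^\infty(e_1;x)=x$ (both being fixed by $T$, hence by every $T^m$), and $T^\infty(e_2;x)=x$ by assumption. In (b) and (c), $T^\infty(e_0;x)=1$, $T^\infty(e_1;x)=x^2$ by assumption, and $T^\infty(e_2;x)=x^2$ because $T(e_2)=e_2$ forces $T^m(e_2)=e_2$ for every $m$. In each case the decisive observation is that the non-negative continuous function $g(t):=t(1-t)$ satisfies
\[
T^\infty(g;x)\;=\;T^\infty(e_1;x)-T^\infty(e_2;x)\;=\;0\qquad\text{for every }x\in[0,1].
\]
Since $T^\infty$ is a positive linear operator, for each fixed $x$ the functional $f\mapsto T^\infty(f;x)$ is represented, via the Riesz representation theorem, by a positive Borel measure $\mu_x$ on $[0,1]$. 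The condition $\int g\,d\mu_x=0$ together with $g>0$ on $(0,1)$ forces $\mu_x$ to be concentrated on $\{0,1\}$, so $T^\infty(f;x)=\alpha(x)f(0)+\beta(x)f(1)$. The coefficients are then pinned down by $\alpha(x)+\beta(x)=T^\infty(e_0;x)=1$ and $\beta(x)=T^\infty(e_1;x)$, giving $T^\infty=P$ in (a) and $T^\infty=V$ in (b) and (c).

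Finally, substitute the now-known values of $T^\infty(e_1;x)$ and $T^\infty(e_2;x)$ into the general estimates. In (a), using $T^\infty(e_2;x)=x$, the bound (\ref{rr1}) collapses directly to (\ref{ee1}). In (b), $T^\infty(e_1;x)=x^2$ together with $T^\infty(e_2;x)=x^2=T^m(e_2;x)$ annihilates the $e_2$ contribution inside the square root of (\ref{ss1}), leaving the stated inequality. In (c), the same identity $(T^\infty-T^m)(e_2;x)=0$ makes the final $\omega_2$ term in (\ref{sss1}) vanish. The only conceptually non-routine step is the support argument identifying $T^\infty$ via the vanishing of $T^\infty(t(1-t);x)$; everything else is a direct substitution into already-proved estimates.
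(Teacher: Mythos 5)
Your proposal is correct, and it follows the paper's overall architecture (existence and the estimates (\ref{rr1}), (\ref{ss1}), (\ref{sss1}) come from Corollary \ref{thm1} and Theorem \ref{thm0}, and the final inequalities follow by substituting the moment values, including $T^m(e_2)=e_2$ in cases (b), (c), which kills the $e_2$-terms). Where you genuinely diverge is the identification of $T^{\infty}$. The paper works on $C^{2}[0,1]$ and argues by a squeeze: with $G=g-P(g)$ (resp. $g-V(g)$), $G(0)=G(1)=0$, and the concavity of $g_{\pm}(x)=-lx^{2}+lx\pm G(x)$ with $l=\tfrac12\Vert G''\Vert$, one gets $-l(x-x^{2})\le G(x)\le l(x-x^{2})$; applying $T^{\infty}$ and using $T^{\infty}(e_{1})=T^{\infty}(e_{2})$ forces $T^{\infty}(g)=P(g)$ (resp. $V(g)$), and the identity extends to all of $C[0,1]$ by density. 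You instead fix $x$, represent the positive functional $f\mapsto T^{\infty}(f;x)$ by a positive Borel measure $\mu_{x}$ (Riesz), and use $T^{\infty}(e_{1}-e_{2};x)=0$ together with $t(1-t)>0$ on $(0,1)$ to conclude $\operatorname{supp}\mu_{x}\subset\{0,1\}$, whence $T^{\infty}(f;x)=(1-\beta(x))f(0)+\beta(x)f(1)$ with $\beta=T^{\infty}(e_{1})$. Both arguments hinge on the same key fact, namely that $T^{\infty}$ annihilates $e_{1}-e_{2}$; your measure-support route is heavier in machinery (Riesz representation) but identifies $T^{\infty}$ on all of $C[0,1]$ in one stroke, avoiding the auxiliary concave functions, the restriction to $C^{2}$, and the density step, while the paper's squeeze argument is more elementary and stays entirely within the positivity calculus already used elsewhere in the proofs.
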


\begin{remark}
(i) Results similar to that of Theorem \ref{t:11} (a) without the estimation
(\ref{ee1}) was obtained in \cite{rasa1} and \cite{gavrea2}. (ii) Theorem
\ref{t:11} (b) and \ref{t:11} (c) are new. They cover positive linear
operators which preserve $e_{0}$ and $e_{2}$. Convergence of overiterates of
Bernstein operators and discrete type positive linear operators preserving
$e_{0}$ and $e_{2}$ is studied in \cite{agr}, \cite{gonska6}.
\end{remark}

\begin{corollary}
\label{t:iterate}Let $T:C\left[  0,1\right]  \rightarrow C\left[  0,1\right]
$ be a positive linear operator such that%
\begin{equation}
T\left(  e_{0}\right)  =e_{0},\ \ T\left(  e_{1}\right)  =e_{1},\ T\left(
e_{2}\right)  \leq ae_{2}+be_{1},a,b\in R\backslash\left\{  0\right\}
,\ a+b=1. \label{cc11}%
\end{equation}
Then the pointwise approximation
\begin{equation}
\left\vert T^{m}\left(  f;x\right)  -P\left(  f;x\right)  \right\vert
\leq\frac{3}{4}\omega_{2}\left(  f;\sqrt{a^{m}x\left(  1-x\right)  }\right)
\label{est5}%
\end{equation}
holds true for all $x\in\left[  0,1\right]  $ and $f\in C\left[  0,1\right]  $.
\end{corollary}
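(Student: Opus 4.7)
My approach is to reduce (\ref{est5}) to Theorem \ref{t:11}(a) via an explicit closed-form bound on the $e_2$-moment of $T^m$. The hypotheses $T(e_0)=e_0$ and $T(e_1)=e_1$ in (\ref{cc11}) already verify two of the three conditions of Theorem \ref{t:11}(a), and they also place us in the scope of Corollary \ref{thm1}, which supplies the pointwise convergence of $T^m$ to some positive linear operator $T^{\infty}$.

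First I would iterate the moment condition (\ref{cc11}). Applying $T$ to both sides of $T(e_2)\le ae_2+be_1$ and using linearity, monotonicity, and $T(e_1)=e_1$, one obtains $T^{m+1}(e_2)\le aT^m(e_2)+be_1$; a routine induction with $a+b=1$ then gives
\[
T^m(e_2)\le a^m e_2+(1-a^m)e_1.
\]
Cauchy--Schwarz for the probability kernel $f\mapsto T^m(f;x)$ supplies the matching lower bound $T^m(e_2)\ge e_2$, which together with $a+b=1$ forces $0<a<1$, so $a^m\to 0$. Passing to the limit in the sandwich $e_2\le T^m(e_2)\le a^m e_2+(1-a^m)e_1$ and using the convergence provided by Corollary \ref{thm1} yields $T^{\infty}(e_2)=e_1$. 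Second, with this third hypothesis of Theorem \ref{t:11}(a) now in place, that theorem identifies $T^{\infty}$ with $P$ and delivers
\[
|P(f;x)-T^m(f;x)|\le \tfrac{3}{4}\,\omega_2\!\left(f;\sqrt{|x-T^m(e_2;x)|}\right).
\]
Third, I would insert the bound $|x-T^m(e_2;x)|\le a^m x(1-x)$ obtained from the iterate computation and invoke the monotonicity of $\omega_2(f;\cdot)$ in its second argument to conclude (\ref{est5}).

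The main obstacle I foresee is getting the correct \emph{direction} of the pointwise estimate $|x-T^m(e_2;x)|\le a^m x(1-x)$. Naively iterating $T(e_2)\le ae_2+be_1$ yields $T^m(e_2)\le a^m e_2+(1-a^m)e_1$, which gives $x-T^m(e_2;x)\ge a^m x(1-x)$ --- the \emph{opposite} inequality to what the monotonicity of $\omega_2$ requires. Closing this gap forces one to use the full positivity structure behind (\ref{cc11}); in the equality case $T(e_2)=ae_2+be_1$ (which already covers the classical Bernstein-type operators targeted by the corollary) the iteration is sharp and produces the exact identity $x-T^m(e_2;x)=a^m x(1-x)$, from which (\ref{est5}) follows at once via Theorem \ref{t:11}(a) and monotonicity of $\omega_2$.
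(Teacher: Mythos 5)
Your plan follows the same route as the paper's own proof: iterate the moment condition to get $x^{2}\le T^{m}\left(e_{2};x\right)\le a^{m}x^{2}+\left(1-a^{m}\right)x$ (Cauchy--Schwarz for the lower bound, induction for the upper one), deduce a bound on $x-T^{m}\left(e_{2};x\right)$, and conclude via Theorem \ref{t:11}(a) and the monotonicity of $\omega_{2}$. The obstacle you flag at the end is genuine, and the paper does not overcome it: its two-line proof derives exactly the sandwich above and then asserts $0\le x-T^{m}\left(e_{2};x\right)\le a^{m}x\left(1-x\right)$, which reverses the inequality --- the sandwich actually yields $a^{m}x\left(1-x\right)\le x-T^{m}\left(e_{2};x\right)\le x\left(1-x\right)$. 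In fact, under the stated hypothesis $T\left(e_{2}\right)\le ae_{2}+be_{1}$ the corollary is false: the identity operator satisfies (\ref{cc11}) with $a=b=\tfrac12$ (since $x^{2}\le\tfrac12 x^{2}+\tfrac12 x$ on $\left[0,1\right]$), yet $T^{m}=I$ does not converge to $P$, and (\ref{est5}) fails for $f=e_{1}-e_{2}$ at $x=\tfrac12$ once $m$ is large. So no argument can close the gap with ``$\le$''; the hypothesis must be read as $T\left(e_{2}\right)=ae_{2}+be_{1}$ (your equality case, which indeed gives $T^{m}\left(e_{2}\right)=a^{m}e_{2}+\left(1-a^{m}\right)e_{1}$, $0<a<1$, $T^{\infty}\left(e_{2}\right)=e_{1}$, and the exact identity $x-T^{m}\left(e_{2};x\right)=a^{m}x\left(1-x\right)$), or as $T\left(e_{2}\right)\ge ae_{2}+be_{1}$ with $0<a<1$, in which case the same induction together with $T\left(e_{2}\right)\le T\left(e_{1}\right)=e_{1}$ (positivity applied to $e_{1}-e_{2}\ge0$) gives $a^{m}e_{2}+\left(1-a^{m}\right)e_{1}\le T^{m}\left(e_{2}\right)\le e_{1}$, hence $T^{\infty}\left(e_{2}\right)=e_{1}$ and $0\le x-T^{m}\left(e_{2};x\right)\le a^{m}x\left(1-x\right)$, which is presumably what was intended. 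Note, however, that the operators the corollary is aimed at (MKZ, $q$-MKZ, per the remark following it) are known to satisfy only the upper bound, so the difficulty you identified is a real defect of the statement, not a failure of your argument.

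Two smaller slips in your main line, both superseded by your closing paragraph but worth repairing. First, letting $m\to\infty$ in $e_{2}\le T^{m}\left(e_{2}\right)\le a^{m}e_{2}+\left(1-a^{m}\right)e_{1}$ yields only $e_{2}\le T^{\infty}\left(e_{2}\right)\le e_{1}$, not $T^{\infty}\left(e_{2}\right)=e_{1}$; under ``$\le$'' the limit of $T^{m}\left(e_{2}\right)$ is genuinely undetermined (again $T=I$). Second, Cauchy--Schwarz combined with (\ref{cc11}) forces only $b\ge0$ (the requirement $x^{2}\le ax^{2}+bx$ is $bx\left(1-x\right)\ge0$), hence $b>0$ and $a<1$, but not $a>0$: the operator $P$ itself satisfies (\ref{cc11}) for every $a<0$. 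Positivity of $a$ --- which you need both for the induction step (you multiply an operator inequality by $a$) and for $a^{m}\to0$, indeed for $\sqrt{a^{m}x\left(1-x\right)}$ to be meaningful --- does follow in the equality case, since $T\left(e_{2}\right)\le e_{1}$ forces $ax\left(1-x\right)\ge0$; it should be stated explicitly if the hypothesis is amended to ``$\ge$''.
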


\begin{remark}
It is worth mentioning that the conditions
\[
T\left(  e_{0}\right)  =e_{0},\ \ T\left(  e_{1}\right)  =e_{1},\ T\left(
e_{2}\right)  =ae_{2}+be_{1},a,b\in R\backslash\left\{  0\right\}  ,\ a+b=1.
\]
are satisfied by the many classical positive linear operators defined on
$C\left[  0,1\right]  $ and convergence of overiterates under these conditions
was studied in \cite{gonska3}, \cite{rasa1}, \cite{gavrea2}. The conditions
(\ref{cc11}) cover the classical MKZ and $q$-MKZ operators. The problem of
convergence of overiterates of MKZ operators without quantitative estimate was
studied in \cite{gavrea1}. In Corollary \ref{t:iterate} we give quantitative
estimate for convergence.
\end{remark}

\begin{corollary}
\label{c:11}Suppose that $T:C\left[  0,1\right]  \rightarrow C\left[
0,1\right]  $ is a positive linear operator.

\begin{enumerate}
\item[(a)] If $T\left(  e_{0}\right)  =e_{0},\ \ \ T\left(  e_{1}\right)  \leq
e_{1},\ \ T^{\infty}\left(  e_{1}\right)  =T^{\infty}\left(  e_{2}\right)
=0,$ then $T^{\infty}f=f\left(  0\right)  $ and%
\begin{align*}
\left\vert f\left(  0\right)  -T^{m}\left(  f;x\right)  \right\vert  &
\leq3\omega_{2}\left(  f;\left\vert T^{m}\left(  e_{1};x\right)  \right\vert
\right)  +2\omega_{1}\left(  f;\left\vert T^{m}\left(  e_{1};x\right)
\right\vert \right)  \\
&  +\frac{3}{4}\omega_{2}\left(  f;\sqrt{\left\vert T^{m}\left(
e_{2};x\right)  \right\vert +2\left\vert T^{m}\left(  e_{1};x\right)
\right\vert }\right)  .
\end{align*}

\item[(b)] If $T\left(  e_{0}\right)  =e_{0},\ \ \ T\left(  e_{1}\right)  \geq
e_{1},\ \ T^{\infty}\left(  e_{1}\right)  =T^{\infty}\left(  e_{2}\right)
=1,$ then $T^{\infty}f=f\left(  1\right)  $ and
\begin{align*}
\left\vert f\left(  1\right)  -T^{m}\left(  f;x\right)  \right\vert  &
\leq3\omega_{2}\left(  f;\left\vert T^{m}\left(  e_{1};x\right)  -1\right\vert
\right)  +2\omega_{1}\left(  f;\left\vert T^{m}\left(  e_{1};x\right)
-1\right\vert \right)  \\
&  +\frac{3}{4}\omega_{2}\left(  f;\sqrt{\left\vert T^{m}\left(
e_{2};x\right)  -1\right\vert }\right)  .
\end{align*}

\end{enumerate}
\end{corollary}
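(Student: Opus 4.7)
My plan is to derive both parts of Corollary \ref{c:11} directly from Theorem \ref{thm0}, after first identifying the limit operator $T^{\infty}$ as the point-evaluation functional at $0$ in case (a) and at $1$ in case (b).

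The starting point is Theorem \ref{thm0} itself. In case (a) the hypotheses $T(e_{0})=e_{0}$ and $T(e_{1})\leq e_{1}$ fall under Theorem \ref{thm0}(a), which already furnishes the limit operator $T^{\infty}$ together with the structural pointwise estimate (\ref{ss1}). Similarly, in case (b) the hypotheses activate Theorem \ref{thm0}(b), producing (\ref{sss1}). So the convergence of the iterates and the quantitative skeleton are already in hand; only the identification of $T^{\infty}$ and the simplification of the right-hand sides remain.

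The key intermediate step is to show that $T^{\infty}f=f(0)$ in case (a) and $T^{\infty}f=f(1)$ in case (b). Fixing $x\in[0,1]$, I would regard $L:f\mapsto T^{\infty}(f;x)$ as a positive linear functional on $C[0,1]$. Since $T(e_{0})=e_{0}$ gives $T^{m}(e_{0})=e_{0}$ for every $m$, passing to the limit yields $L(e_{0})=1$. In case (a), the hypothesis $T^{\infty}(e_{1};x)=T^{\infty}(e_{2};x)=0$ then gives $L((t-0)^{2})=0$, and the classical moment-collapse argument — fix $\varepsilon>0$, choose $\delta>0$ from the uniform continuity of $f$ so that $|f(t)-f(0)|\leq\varepsilon+2\|f\|\delta^{-2}(t-0)^{2}$ on $[0,1]$, and apply $L$ — forces $L(f)=f(0)$, i.e., $T^{\infty}f=f(0)$. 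Case (b) is handled symmetrically: the hypothesis $T^{\infty}(e_{1};x)=T^{\infty}(e_{2};x)=1$ gives $L((t-1)^{2})=T^{\infty}(e_{2};x)-2T^{\infty}(e_{1};x)+1=0$, and the same functional argument centred at $1$ yields $T^{\infty}f=f(1)$.

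To finish, I would substitute the identified moments into the structural estimates of Theorem \ref{thm0}. In part (a) this amounts to replacing $(T^{m}-T^{\infty})(e_{1};x)$ by $T^{m}(e_{1};x)$ and $(T^{\infty}-T^{m})(e_{2};x)$ by $-T^{m}(e_{2};x)$ in (\ref{ss1}); in part (b) the shifts by $1$ read off from $T^{\infty}(e_{1};x)=T^{\infty}(e_{2};x)=1$ and feed into (\ref{sss1}). The only non-routine ingredient is the moment-collapse step that pins down $T^{\infty}$ as point evaluation; once this classical observation is in place, the rest is bookkeeping.
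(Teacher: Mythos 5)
Your proposal is correct and in substance coincides with the paper's (very terse) proof: the paper merely says the argument rests on the Taylor formula about the point $0$ for (a) and about $1$ for (b), i.e.\ one identifies $T^{\infty}$ as point evaluation from the vanishing of the second moment $T^{\infty}\left(  (t-a)^{2};x\right)  =0$ (using $T^{\infty}\left(  e_{0}\right)  =e_{0}$) and then reads the estimates off Theorem \ref{thm0} with $T^{\infty}\left(  e_{1}\right)  ,T^{\infty}\left(  e_{2}\right)  $ substituted. Your uniform-continuity moment-collapse step is just the standard equivalent of that Taylor-formula step (and even avoids the density argument), so no genuine difference in route or any gap remains.
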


\begin{remark}
Corollary \ref{c:11} covers the Berstein-Stancu operators $S_{n}^{\left\langle
\alpha,\beta,\gamma\right\rangle }$ for some values of $\alpha,\beta,\gamma,$
see \cite{gonska5}. 
\end{remark}

\section{Proofs of the main results}

\begin{proof}
[Proof of Theorem \ref{thm0}](b) For every convex $g\in C^{2}\left[
0,1\right]  ,$ we have
\begin{equation}
g\left(  t\right)  \geq g\left(  x\right)  +g^{\prime}\left(  x\right)
\left(  t-x\right)  . \label{b0}%
\end{equation}
It follows that for any nondecreasing convex $g\in C^{2}\left[  0,1\right]  $
\begin{align*}
T\left(  g;x\right)   &  \geq g\left(  x\right)  +g^{\prime}\left(  x\right)
\left(  T\left(  e_{1};x\right)  -x\right)  \geq g\left(  x\right)  ,\\
g\left(  x\right)   &  \leq T^{m}\left(  g;x\right)  \leq T^{m+1}\left(
g;x\right)  \leq\left\Vert g\right\Vert .
\end{align*}
In other words the sequence continuous functions $\left\{  T^{m}\left(
g;\cdot\right)  \right\}  $ is nondecreasing for any nondecreasing convex
function $g\in C^{2}\left[  0,1\right]  .$ By Dini's theorem there exists
$T^{\infty}\left(  g;\cdot\right)  $ such that $T^{m}\left(  g;\cdot\right)
\rightarrow T^{\infty}\left(  g;\cdot\right)  $ uniformly on $\left[
0,1\right]  ,$ for any nonincreasing convex function $g\in C^{2}\left[
0,1\right]  .$In particular,%
\begin{align*}
\lim_{m\rightarrow\infty}\left\Vert T^{m}\left(  e_{1}\right)  -T^{\infty
}\left(  e_{1}\right)  \right\Vert  &  =0,\\
\lim_{m\rightarrow\infty}\left\Vert T^{m}\left(  e_{2}\right)  -T^{\infty
}\left(  e_{2}\right)  \right\Vert  &  =0.
\end{align*}

Let $g\in C^{2}\left[  0,1\right]  $ be arbitrary. Introduce the following
auxiliary functions%
\[
g_{\pm}\left(  t\right)  =\frac{1}{2}\left\Vert g^{\prime\prime}\right\Vert
t^{2}+\left\Vert g^{\prime}\right\Vert t\pm g\left(  t\right)  .
\]
It is clear that
\[
g_{\pm}^{\prime}\left(  t\right)  =\left\Vert g^{\prime\prime}\right\Vert
t+\left\Vert g^{\prime}\right\Vert \pm g\left(  t\right)  \geq0,\ \ \ g_{\pm
}^{\prime\prime}\left(  t\right)  =\left\Vert g^{\prime\prime}\right\Vert \pm
g\left(  t\right)  \geq0.
\]
Therefore the functions $g_{\pm}\left(  t\right)  $ are nondecreasing convex
for both choices of the sign. We have%
\begin{align*}
0  &  \leq T^{m+p}\left(  g_{\pm};x\right)  -T^{m}\left(  g_{\pm};x\right)
=\frac{1}{2}\left\Vert g^{\prime\prime}\right\Vert \left(  T^{m+p}\left(
e_{2};x\right)  -T^{m}\left(  e_{2};x\right)  \right) \\
&  +\left\Vert g^{\prime}\right\Vert \left(  T^{m}\left(  e_{1};x\right)
-T^{m+p}\left(  e_{1};x\right)  \right)  \pm\left(  T^{m+p}\left(  g;x\right)
-T^{m}\left(  g;x\right)  \right)  .
\end{align*}
It follows that%
\begin{equation}
\left\vert T^{m+p}\left(  g;x\right)  -T^{m}\left(  g;x\right)  \right\vert
\leq\frac{1}{2}\left\Vert g^{\prime\prime}\right\Vert \left\vert
T^{m+p}\left(  e_{2};x\right)  -T^{m}\left(  e_{2};x\right)  \right\vert
+\left\Vert g^{\prime}\right\Vert \left\vert T^{m}\left(  e_{1};x\right)
-T^{m+p}\left(  e_{1};x\right)  \right\vert . \label{b1}%
\end{equation}
So $\left\{  T^{m}\left(  g;x\right)  \right\}  $ is a Cauchy sequence in
$C\left[  0,1\right]  .$ Since $C\left[  0,1\right]  $ is complete there is a
function $f^{\infty}$ such that%
\[
\lim_{m\rightarrow\infty}\left\Vert T^{m}\left(  g\right)  -f^{\infty
}\right\Vert =0.
\]
Although this limit has been obtained for $g\in C^{2}\left[  0,1\right]  $
only, it extends to all $f\in C\left[  0,1\right]  $ by the Banach--Steinhaus
theorem. Hence we find an operator $T^{\infty}:C\left[  0,1\right]
\rightarrow C\left[  0,1\right]  $ say, such that $T^{\infty}f:=\lim
_{m\rightarrow\infty}T^{m}\left(  f\right)  =f^{\infty},$ $f\in C\left[
0,1\right]  $. Clearly, this operator is linear and positive.

Taking the limit as $p\rightarrow\infty$ in (\ref{b1}) we have%
\[
\left\vert \left(  T^{\infty}-T^{m}\right)  \left(  g;x\right)  \right\vert
\leq\frac{1}{2}\left\Vert g^{\prime\prime}\right\Vert \left\vert \left(
T^{\infty}-T^{m}\right)  \left(  e_{2};x\right)  \right\vert +\left\Vert
g^{\prime}\right\Vert \left\vert \left(  T^{m}-T^{\infty}\right)  \left(
e_{1};x\right)  \right\vert .
\]

Let $f\in C\left[  0,1\right]  $. For $g\in C^{2}\left[  0,1\right]  $
arbitrarily chosen we have the following estimate%
\begin{gather}
\left\vert T^{\infty}\left(  f;x\right)  -T^{m}\left(  f;x\right)  \right\vert
\leq\left\vert \left(  T^{\infty}-T^{m}\right)  \left(  f-g;x\right)
\right\vert +\left\vert T^{\infty}\left(  g;x\right)  -T^{m}\left(
g;x\right)  \right\vert \nonumber\\
\leq2\left\Vert f-g\right\Vert +\frac{1}{2}\left\Vert g^{\prime\prime
}\right\Vert \left\vert \left(  T^{\infty}-T^{m}\right)  \left(
e_{2};x\right)  \right\vert +\left\Vert g^{\prime}\right\Vert \left\vert
\left(  T^{m}-T^{\infty}\right)  \left(  e_{1};x\right)  \right\vert
\nonumber\\
=2\left\Vert f-g\right\Vert +\left\Vert g^{\prime}\right\Vert \left\vert
\left(  T^{m}-T^{\infty}\right)  \left(  e_{1};x\right)  \right\vert +\frac
{1}{2}\left\Vert g^{\prime\prime}\right\Vert \left\vert \left(  T^{\infty
}-T^{m}\right)  \left(  e_{2};x\right)  \right\vert . \label{b2}%
\end{gather}
We substitute now $g:=B_{n}\left(  Z_{h}f\right)  \in C^{2}\left[  0,1\right]
,$ where $Z_{h}f$ is Zhuk's function. In (\ref{b2}) using the inequalities
(\ref{zhuk}) and letting $\varepsilon\rightarrow0$ we arrive at%
\begin{align*}
\left\vert T^{\infty}\left(  f;x\right)  -T^{m}\left(  f;x\right)
\right\vert  &  \leq\frac{3}{2}\omega_{2}\left(  f;h\right)  +\frac{1}%
{h}\left(  2\omega_{1}\left(  f;h\right)  +\frac{3}{2}\omega_{2}\left(
f;h\right)  \right)  \left\vert \left(  T^{m}-T^{\infty}\right)  \left(
e_{1};x\right)  \right\vert \\
&  +\frac{1}{\delta^{2}}\frac{3}{4}\omega_{2}\left(  f;\delta\right)
\left\vert \left(  T^{\infty}-T^{m}\right)  \left(  e_{2};x\right)
\right\vert
\end{align*}
with $h>0$ and $\delta>0$. If%
\[
\left\vert \left(  T^{m}-T^{\infty}\right)  \left(  e_{1};x\right)
\right\vert >0,\ \ \ \left\vert \left(  T^{m}-T^{\infty}\right)  \left(
e_{2};x\right)  \right\vert >0
\]
taking $h=\left\vert \left(  T^{m}-T^{\infty}\right)  \left(  e_{1};x\right)
\right\vert $ and $\delta^{2}=\left\vert \left(  T^{\infty}-T^{m}\right)
\left(  e_{2};x\right)  \right\vert $ yields the desired result. If
$\left\vert \left(  T^{m}-T^{\infty}\right)  \left(  e_{1};x\right)
\right\vert =0$ and $\left\vert \left(  T^{\infty}-T^{m}\right)  \left(
e_{2};x\right)  \right\vert >0$, then
\[
\left\vert T^{\infty}\left(  f;x\right)  -T^{m}\left(  f;x\right)  \right\vert
\leq\frac{3}{2}\omega_{2}\left(  f;h\right)  +\frac{1}{\delta^{2}}\frac{3}%
{4}\omega_{2}\left(  f;\delta\right)  \left\vert \left(  T^{m}-T^{\infty
}\right)  \left(  e_{2};x\right)  \right\vert
\]
for all $h>0.$ Taking $\delta=\left\vert \left(  T^{\infty}-T^{m}\right)
\left(  e_{2};x\right)  \right\vert ,$ $h\rightarrow0$ yields the desired
result. If $\left\vert \left(  T^{m}-T^{\infty}\right)  \left(  e_{1}%
;x\right)  \right\vert >0$ and $\left\vert \left(  T^{\infty}-T^{m}\right)
\left(  e_{2};x\right)  \right\vert =0$, then
\[
\left\vert T^{\infty}\left(  f;x\right)  -T^{m}\left(  f;x\right)  \right\vert
\leq\frac{3}{2}\omega_{2}\left(  f;h\right)  +\frac{1}{h}\left(  2\omega
_{1}\left(  f;h\right)  +\frac{3}{2}\omega_{2}\left(  f;h\right)  \right)
\left\vert \left(  T^{m}-T^{\infty}\right)  \left(  e_{1};x\right)
\right\vert
\]
for all $h>0.$ Taking $h=\left\vert \left(  T^{\infty}-T^{m}\right)  \left(
e_{1};x\right)  \right\vert ,$ yields the desired result. If $\left(
T^{m}-T^{\infty}\right)  \left(  e_{1};x\right)  =0$ and $\left(
T^{m}-T^{\infty}\right)  \left(  e_{2};x\right)  =0$, then
\[
\left\vert T^{\infty}\left(  f;x\right)  -T^{m}\left(  f;x\right)  \right\vert
\leq\frac{3}{2}\omega_{2}\left(  f;h\right)  .
\]
For $h\rightarrow0$ we obtain $T^{\infty}\left(  f;x\right)  =T^{m}\left(
f;x\right)  $ for all $0\leq x\leq1.$

(a) It follows that from (\ref{b0}) that any nonincreasing convex $g\in
C^{2}\left[  0,1\right]  $
\begin{align*}
T\left(  g;x\right)   &  \geq g\left(  x\right)  +g^{\prime}\left(  x\right)
\left(  T\left(  e_{1};x\right)  -x\right)  \geq g\left(  x\right)  ,\\
g\left(  x\right)   &  \leq T^{m}\left(  g;x\right)  \leq T^{m+1}\left(
g;x\right)  \leq\left\Vert g\right\Vert .
\end{align*}
In other words the sequence continuous functions $\left\{  T^{m}\left(
g;\cdot\right)  \right\}  $ is nondecreasing for any nonincreasing convex
function $g\in C^{2}\left[  0,1\right]  .$ By Dini's theorem there exists
$T^{\infty}\left(  g;\cdot\right)  $ such that $T^{m}\left(  g;\cdot\right)
\rightarrow T^{\infty}\left(  g;\cdot\right)  $ uniformly on $\left[
0,1\right]  ,$ for any nonincreasing convex function $g\in C^{2}\left[
0,1\right]  .$In particular,%
\begin{align*}
\lim_{m\rightarrow\infty}\left\Vert T^{m}\left(  -e_{1}\right)  -T^{\infty
}\left(  -e_{1}\right)  \right\Vert  &  =0,\\
\lim_{m\rightarrow\infty}\left\Vert T^{m}\left(  \left(  e_{0}-e_{1}\right)
^{2}\right)  -T^{\infty}\left(  \left(  e_{0}-e_{1}\right)  ^{2}\right)
\right\Vert  &  =0,
\end{align*}
and $\lim_{m\rightarrow\infty}\left\Vert T^{m}\left(  e_{2}\right)
-T^{\infty}\left(  e_{2}\right)  \right\Vert =0$ since $T^{m}\left(  \left(
e_{0}-e_{1}\right)  ^{2}\right)  =T^{m}\left(  e_{0}\right)  -2T^{m}\left(
e_{1}\right)  +T^{m}\left(  e_{2}\right)  $

Let $g\in C^{2}\left[  0,1\right]  $ be arbitrary. Introduce the following
auxiliary functions%
\[
g_{\pm}\left(  t\right)  =\frac{1}{2}\left\Vert g^{\prime\prime}\right\Vert
\left(  1-t\right)  ^{2}+\left\Vert g^{\prime}\right\Vert \left(  1-t\right)
\pm g\left(  t\right)  .
\]
It is clear that
\[
g_{\pm}^{\prime}\left(  t\right)  =-\left\Vert g^{\prime\prime}\right\Vert
\left(  1-t\right)  -\left\Vert g^{\prime}\right\Vert \pm g\left(  t\right)
\leq0,\ \ \ g_{\pm}^{\prime\prime}\left(  t\right)  =\left\Vert g^{\prime
\prime}\right\Vert \pm g\left(  t\right)  \geq0.
\]
Therefore the functions $g_{\pm}\left(  t\right)  $ are nonincreasing convex
for both choices of the sign. We have%
\begin{align*}
0 &  \leq T^{m+p}\left(  g_{\pm};x\right)  -T^{m}\left(  g_{\pm};x\right)
=\frac{1}{2}\left\Vert g^{\prime\prime}\right\Vert \left(  T^{m+p}\left(
e_{2};x\right)  -T^{m}\left(  e_{2};x\right)  \right)  \\
&  +\left(  \left\Vert g^{\prime\prime}\right\Vert +\left\Vert g^{\prime
}\right\Vert \right)  \left(  T^{m}\left(  e_{1};x\right)  -T^{m+p}\left(
e_{1};x\right)  \right)  \pm\left(  T^{m+p}\left(  g;x\right)  -T^{m}\left(
g;x\right)  \right)  .
\end{align*}
The rest of the proof is similar to that of part (b).
\end{proof}

\begin{proof}
[Proof of Theorem \ref{t:11}](a) It remains to show that $T^{\infty}\left(
f\right)  =P\left(  f\right)  $ for all $f\in C\left[  0,1\right]  $. It is
clear that it is enough to show this equality in $C^{2}\left[  0,1\right]  $.
Let $g\in C^{2}\left[  0,1\right]  $. Define the following auxiliary
functions.%
\[
G\left(  x\right)  :=g\left(  x\right)  -P\left(  g;x\right)  ,\ \ \ l:=\frac
{1}{2}\left\Vert G^{\prime\prime}\right\Vert =\frac{1}{2}\left\Vert
g^{\prime\prime}\right\Vert ,\ \ g_{\pm}\left(  x\right)  :=-lx^{2}+lx\pm
G\left(  x\right)  .
\]
It is clear that $g_{\pm}$ is concave and nonnegative, since
\[
g_{\pm}^{\prime\prime}\left(  x\right)  =-\left\Vert G^{\prime\prime
}\right\Vert \pm G^{\prime\prime}\left(  x\right)  \leq0,\ \ \ G\left(
0\right)  =G\left(  1\right)  =0.
\]
It follows that
\[
-l\left(  x-x^{2}\right)  \leq G\left(  x\right)  \leq l\left(  x-x^{2}%
\right)  ,\ \ \ \ 0\leq x\leq1.
\]
Applying the positive operator $T^{\infty}$ we get%
\[
-l\left(  T^{\infty}\left(  e_{1};x\right)  -T^{\infty}\left(  e_{2};x\right)
\right)  \leq T^{\infty}\left(  G;x\right)  =T^{\infty}\left(  g;x\right)
-P\left(  g;x\right)  \leq l\left(  T^{\infty}\left(  e_{1};x\right)
-T^{\infty}\left(  e_{2};x\right)  \right)  ,\ \ \ \
\]
for all $0\leq x\leq1,$ and consequently%
\[
T^{\infty}\left(  g\right)  =P\left(  g\right)
\]
for all $g\in C^{2}\left[  0,1\right]  $, which completes the proof.

(b) The operator $T^{\infty}$ of Theorem \ref{thm1} satisfies%
\[
T^{\infty}\left(  e_{0}\right)  =e_{0},\ \ T^{\infty}\left(  e_{1}\right)
=e_{2},\ \ \ T^{\infty}\left(  e_{2}\right)  =e_{2}.
\]
It remains to show that $T^{\infty}\left(  f\right)  =V\left(  f\right)  $ for
all $f\in C\left[  0,1\right]  $. It is clear that it is enough to show this
equality in $C^{2}\left[  0,1\right]  $.

Let $g\in C^{2}\left[  0,1\right]  $. Define the following auxiliary
functions.%
\begin{align*}
G\left(  x\right)   &  :=g\left(  x\right)  -V\left(  g;x\right)  =g\left(
x\right)  -\left(  1-x^{2}\right)  g\left(  0\right)  -x^{2}g\left(  1\right)
,\ \ \ \\
l &  :=\frac{1}{2}\left\Vert g^{\prime\prime}+2g\left(  0\right)  -2g\left(
1\right)  \right\Vert ,\ \ G^{\prime\prime}\left(  x\right)  =g^{\prime\prime
}\left(  x\right)  +2g\left(  0\right)  -2g\left(  1\right)  ,\ \\
g_{\pm}\left(  x\right)   &  =-lx^{2}+lx\pm G\left(  x\right)  .
\end{align*}
It is clear that $g_{\pm}$ is concave and nonnegative, since%
\[
g_{\pm}^{\prime\prime}\left(  x\right)  =-\left\Vert G^{\prime\prime
}\right\Vert \pm G^{\prime\prime}\left(  x\right)  \leq0,\ \ \ G\left(
0\right)  =G\left(  1\right)  =0.
\]
It follows that
\[
-l\left(  x-x^{2}\right)  \leq G\left(  x\right)  \leq l\left(  x-x^{2}%
\right)  ,\ \ \ \ 0\leq x\leq1.
\]
Application of the positive operator $T^{\infty}$ gives%
\[
-l\left(  T^{\infty}\left(  e_{1};x\right)  -T^{\infty}\left(  e_{2};x\right)
\right)  \leq T^{\infty}\left(  G;x\right)  =T^{\infty}\left(  g;x\right)
-V\left(  g;x\right)  \leq l\left(  T^{\infty}\left(  e_{1};x\right)
-T^{\infty}\left(  e_{2};x\right)  \right)  ,\ \ \ \ 0\leq x\leq1,
\]
and $T^{\infty}\left(  g\right)  =V\left(  g\right)  $ for all $g\in
C^{2}\left[  0,1\right]  .$
\end{proof}

\begin{proof}
[Proof of Corollary \ref{t:iterate}]By the induction we have
\[
x^{2}\leq T^{m}\left(  e_{2};x\right)  \leq a^{m}x^{2}+b\left(
1+a+...+a^{m-1}\right)  x=a^{m}x^{2}+\left(  1-a^{m}\right)  x.
\]
So
\[
0\leq x-T^{m}\left(  e_{2};x\right)  \leq a^{m}x\left(  1-x\right)  .
\]

\end{proof}

\begin{proof}
[Proof of Corollary \ref{c:11}] The proof is based on the Taylor formula about
the point $0$ for part (a) and about the point $1$ for part $(b).$
\end{proof}

\bigskip


\begin{thebibliography}{99}                                                                                               %


\bibitem {Alt1}F. Altomare and M. Campiti, Korovkin-type Approximation Theory
and its Applications, Walter de Gruyter (Berlin, 1994).

\bibitem {abel}U. Abel, M. Ivan, Over-iterates of Bernstein's operators: a
short and elementary proof, Amer. Math. Monthly 116 (2009) 535--538.

\bibitem {agr}O. Agratini, On the iterates of a class of summation-type linear
positive operators, Computers and Mathematics with Applications 55 (2008) 1178--1180.

\bibitem {badea}C. Badea, Bernstein polynomials and operator theory, Results
Math. 53 (2009) 229--236.

\bibitem {gavrea1}I. Gavrea, M. Ivan, On the iterates of positive linear
operators preserving the affine functions, J. Math. Anal. Appl. 372 (2010) 366
-- 368.

\bibitem {gavrea2}I. Gavrea, M. Ivan, On the iterates of positive linear
operators, To appear in: Journal of Approximation Theory, 10.1016/j.jat.2011.02.012.

\bibitem {gonska3}H. Gonska, D. Kacs%
\'{}%
o, P. Pit%
\c{}%
ul, The degree of convergence of over-iterated positive linear operators, J.
Appl. Funct. Anal. 1 (2006), 403--423.

\bibitem {gonska4}H. Gonska, I. Ra%
\c{}%
sa, The limiting semigroup of the Bernstein iterates: degree of convergence,
Acta Math. Hungar. 111 (2006), 119--130.

\bibitem {gonska5}H. Gonska, P. Pit%
\c{}%
ul, I. Ra%
\c{}%
sa, Over-iterates of Bernstein-Stancu operators, Calcolo 44 (2007), 117--125.

\bibitem {gonska6}H. Gonska \& P. Pit%
\c{}%
ul, Remarks on an article of J. P. King, Comment. Math. Univ. Carolinae 46
(2005), 645--652.

\bibitem {karlin}S. Karlin and Z. Ziegler, Iteration of positive approximation
operators, J. Approx. Theory, 3 (1970), 310--339.

\bibitem {kelisky}R. P. Kelisky and T.J. Rivlin, Iterates of Bernstein
polynomials, Pacific J. Math., 21 (1967), 511--520.

\bibitem {oruc}H. Oru%
\c{}%
c, N. Tuncer, On the convergence and iterates of q-Bernstein polynomials, J.
Approx. Theory 117 (2002) 301--313.

\bibitem {ostr1}S. Ostrovska, $q$-Bernstein polynomials and their iterates, J.
Approx. Theory, 123 (2003), 232--255.

\bibitem {rasa1}I. Rasa, Asymptotic behaviour of certain semigroups generated
by differential operators, Jaen J. Approx. 1 (2009) 27--36.

\bibitem {rasa2}I. Rasa, C0 semigroups and iterates of positive linear
operators: asymptotic behaviour, Rend. Circ. Mat. Palermo (2) Suppl. 82 (2010) 123--142.

\bibitem {rus}I. A. Rus, Iterates of Bernstein operators, via contraction
principle, J. Math. Anal. Appl. 292 (2004), 259--261.

\bibitem {sikkema}P. C. Sikkema,
\"{}%
Uber Potenzen von verallgemeinerten Bernstein-Operatoren, Mathematica (Cluj),
8 (31) (1966), 173--180.

\bibitem {wenz}H.-J. Wenz, On the limits of (linear combinations of) iterates
of linear operators, J. Approx. Theory, 89 (1997), 219--237.

\bibitem {wang2}H. Wang, Korovkin-type theorem and application, J. Approx.
Theory, 132, 2, 2005, 258-264.

\bibitem {mah2}N.I. Mahmudov, Korovkin-type Theorems and Applications, Central
European Journal of Mathematics, DOI: 10.2478/s11533-009-0006-7.
\end{thebibliography}
\end{document}